\numberwithin{equation}{section}
\newtheoremstyle{fancy1}{10pt}{10pt}{\itshape}{12pt}{\textsc\bgroup}{.\egroup}{8pt}{
}
\newtheoremstyle{fancy2}{10pt}{10pt}{}{12pt}{\itshape}{.}{8pt}{ }
\theoremstyle{fancy1}
\newtheorem{lem}[equation]{Lemma}
\newtheorem{thm}[equation]{Theorem}
\newtheorem{problem}[equation]{Problem}
\newtheorem{main}{Theorem}
\newtheorem*{main*}{Theorem}
\newtheorem*{cor*}{Corollary}
\theoremstyle{fancy2}
\newtheorem{rem}[equation]{Remark}
\newtheorem*{rem*}{Remark}
\newcommand{\cref}[1]{Corollary~\ref{#1}}
\newcommand{\Sph}{\mathbb{S}}
\newcommand{\R}{{\mathbb{R}}}
\newcommand{\Z}{{\mathbb{Z}}}
\newcommand{\G}{\ensuremath{\operatorname{\mathsf{G}}}}
\newcommand{\D}{\ensuremath{\operatorname{\mathsf{D}}}}
\renewcommand{\S}{\ensuremath{\operatorname{\mathsf{S}}}}
\def\con#1=#2(#3){#1 \equiv #2 \bmod{#3}}
\newcommand{\diam}{\ensuremath{\operatorname{diam}}}
\newcommand{\rad}{\ensuremath{\operatorname{rad}}}
\newcommand{\curv}{\ensuremath{\operatorname{curv}}}
\newcommand{\ric}{\ensuremath{\operatorname{ric}}}
\newcommand{\vol}{\ensuremath{\operatorname{vol}}}
\newcommand{\dist}{\ensuremath{\operatorname{dist}}}
\newcommand{\no}{\noindent}
\begin{document}
\date{\today}

\title{The Boundary Conjecture for Leaf Spaces}

\author{Karsten Grove}
\address{Department of Mathematics\\
University of Notre Dame\\
      Notre Dame, IN 46556\\USA
      }
\email{kgrove2@nd.edu }

\author{Adam Moreno}
\address{Department of Mathematics\\
University of Notre Dame\\
      Notre Dame, IN 46556\\USA
      }
\email{amoreno3@nd.edu}

\author{Peter Petersen}
\address{Department of Mathematics\\
UCLA\\
      Los Angeles, CA 90095\\USA
      }
\email{petersen@math.ucla.edu}

\thanks{The first named author was supported in part by a grant from the NSF}

\begin{abstract}
We prove that the boundary of an orbit space or more generally a leaf space of a singular Riemannian foliation is an Alexandrov space in its intrinsic metric, and that its lower curvature bound is that of the leaf space. A rigidity theorem for positively curved leaf spaces with maximal boundary volume is also established and plays a key role in the proof of the boundary problem.
\end{abstract}

\maketitle


A basic conjecture going back to the early days of Alexandrov geometry states that the boundary $\partial X$ of an Alexandrov space $X$, with its induced length metric, is itself an Alexandrov space  with the same lower curvature bound as that of $X$.

In case $X$ is a convex subset of a Riemannian manifold $M$ with lower bound on sectional curvature this conjecture was verified in \cite{AKP}.

In this note we deal with other important classes of Alexandrov spaces from Riemanian geometry, namely, \emph{orbit spaces} of isometric group actions or more generally \emph{leaf spaces} of \emph{singular Riemannian foliations} (to be abbreviated as SRF).

\begin{main}
Let $M$ be a closed Riemannian manifold, and $\mathcal{F}$ a singular Riemannian foliation on $M$. If all the leaves in $\mathcal{F}$ are closed, then the boundary conjecture holds for the Alexandrov space $X = M/\mathcal{F}$.
\end{main}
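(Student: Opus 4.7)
The natural approach is induction on $\dim X$, with the base case $\dim X \leq 1$ being immediate. The structural facts that drive the induction are these: for any $[p] \in X$, the space of directions $\Sigma_{[p]} X$ is itself a leaf space of a SRF (namely the infinitesimal foliation of $\mathcal F$ on the unit normal sphere to the leaf $L_p$ at $p$, further quotiented by the slice isotropy), it is strictly lower dimensional than $X$, and it is Alexandrov with $\curv \geq 1$. The induction hypothesis therefore applies to $\Sigma_{[p]} X$ and yields that $\partial \Sigma_{[p]} X$, equipped with its intrinsic length metric, is Alexandrov with $\curv \geq 1$. Since $[p] \in \partial X$ iff $\partial \Sigma_{[p]} X \ne \emptyset$, every boundary point carries a clean infinitesimal model given by the induction.

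The first analytic step is to identify, at each $[p] \in \partial X$, the intrinsic space of directions $\Sigma_{[p]}^{\mathrm{int}} \partial X$ with $\partial \Sigma_{[p]} X$, and hence to identify the intrinsic tangent cone $T_{[p]}^{\mathrm{int}} \partial X$ with $C(\partial \Sigma_{[p]} X)$, which by the induction hypothesis and coning is Alexandrov with $\curv \geq 0$. The inclusion $\Sigma_{[p]}^{\mathrm{int}} \partial X \subseteq \partial \Sigma_{[p]} X$ is essentially tautological from the iterative definition of boundary together with the tangent cone description for leaf spaces. The reverse containment, that every direction in $\partial \Sigma_{[p]} X$ is realized by a short curve in $\partial X$ of the right infinitesimal length, is the content of the step; here I would exploit Molino's local decomposition, closedness of leaves, and the compatibility of stratum refinements with the quotient map to produce such curves.

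The remaining step is to promote this infinitesimal information to a genuine global four-point comparison for $\partial X$ with the intrinsic metric and lower curvature bound $k = \curv X$. This is the principal obstacle. The known difficulty is that intrinsic and extrinsic distances on $\partial X$ differ in ways that are not monotone under comparison, so infinitesimal bounds do not globalize by soft means. The strategy is contradiction via rescaling: if four-point comparison fails on some quadruple, one rescales near a first point of failure to obtain a limit configuration realized inside a tangent cone, where comparison is controlled by the inductive infinitesimal picture. The rigidity theorem for positively curved leaf spaces with maximal boundary volume, announced in the abstract, is precisely the nonlocal tool used to close the argument: any configuration that attains the borderline comparison in the limit forces the localized leaf space to coincide with a model (hemispherical) quotient, and this rigidity both excludes the assumed defect and completes the induction, in the same spirit as maximal-diameter rigidity is used to upgrade infinitesimal information to global bounds in classical Alexandrov geometry.
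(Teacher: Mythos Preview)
Your high-level scaffolding matches the paper's: induction on $\dim X$, using that each $S_xX$ is again a leaf space with $\curv \geq 1$, and invoking the lens rigidity result. But the concrete mechanism you propose for globalization is not the paper's, and as written it is not an argument.

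The paper does \emph{not} attempt to identify $\Sigma_{[p]}^{\mathrm{int}}\partial X$ with $\partial S_{[p]}X$ at every boundary point, nor does it globalize via ``rescaling near a first point of failure'' of four-point comparison (there is no well-defined first failure point for a global condition on quadruples, and no limiting argument of the kind you sketch is carried out). Instead, the paper isolates the set $\partial X_0 \subset \partial X$ of \emph{regular} boundary points: those $x$ with $S_xX$ isometric to a lens $\Sph^{n-1}*[0,\alpha]$. The lens rigidity theorem (Theorem~2.5, the inductive precursor to Theorem~B) shows these are exactly the points where $\partial S_xX = \Sph^{n-1}$. This set is dense, being the union of the codimension-$0$ and codimension-$1$ boundary strata. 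On $\partial X_0$ one has honest \emph{local} lower curvature bounds for the intrinsic metric: the open faces are locally totally geodesic Riemannian pieces, and across their common $(n-1)$-strata Petrunin's gluing theorem gives the bound. The remaining step is to prove that $\partial X_0$ is \emph{intrinsically convex} in $\partial X$: any $\partial X$-minimizing curve $c$ between regular points stays in $\partial X_0$. This is where the induction and rigidity are actually used. At any interior point $c(t_0)$ the two directions of $c$ lie at intrinsic distance $\pi$ in $\partial S_{c(t_0)}X$ (else $c$ shortcuts), so by induction $\partial S_{c(t_0)}X$ is a spherical suspension; at a first non-regular time the suspension point is regular, forcing $\partial S_{c(t_0)}X = \Sph^{n-1}$, whence by lens rigidity $c(t_0)$ is regular after all. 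With local bounds on a dense convex subset, Petrunin's globalization theorem for non-complete geodesic spaces finishes the proof.

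Your proposal is missing precisely this regular-set/convexity reduction and the appeal to Petrunin's globalization; the ``rescaling'' paragraph does not substitute for either, and your account of how rigidity enters (``borderline comparison in the limit forces the localized leaf space to coincide with a model'') does not match its actual role, which is the pointwise characterization $\partial S_xX = \Sph^{n-1} \Rightarrow S_xX$ is a lens.
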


Our proof is intertwined with that of the following \emph{rigidity} result for leaf spaces  $X = M/\mathcal{F}$ with curv$X \ge 1$. Here $\Sph^n$ denotes the unit sphere in euclidean $(n+1)$-space and the spherical join
\begin{center}
$L = L^n_{\alpha} = \Sph^{n-2} * [0,\alpha]$, $0 < \alpha \le \pi$
\end{center}

\no is referred to as the $n$-dimensional \emph{lens} with angle $\alpha$.

\begin{main}
If $X = M/\mathcal{F}$ be an $n$-dimensional leaf space with $\curv X \ge 1$ and $\vol (\partial X) = \vol (\Sph^{n-1})$, then $X$ is isometric to a lens, $L^n_{\alpha}$ with angle $\alpha = \pi/k$, where $k\in \Z_+$.
\end{main}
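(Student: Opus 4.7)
I would prove Theorem~B by induction on $n$. The base case $n = 1$ is immediate: a one-dimensional closed Alexandrov space with curvature $\ge 1$ and nonempty boundary is an interval $[0,\alpha]$, and a one-dimensional closed leaf space has $\alpha = \pi/k$ for some $k \in \Z_+$, so $X = L^1_{\pi/k}$.

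For the inductive step the argument has two geometric inputs. First, Theorem~A (established in tandem) applied to $X$ gives that $\partial X$, in its induced length metric, is an $(n-1)$-dimensional Alexandrov space with $\curv \ge 1$. Combined with $\vol(\partial X) = \vol(\Sph^{n-1})$, the classical maximal-volume rigidity for positively curved Alexandrov spaces (Burago--Gromov--Perelman) forces $\partial X$ to be isometric to $\Sph^{n-1}$. Second, for any antipodal pair $p, p^* \in \partial X \cong \Sph^{n-1}$, the $1$-Lipschitz inclusion $\partial X \hookrightarrow X$ yields $d_X(p, p^*) \le \pi$; assuming equality for at least one such pair, the spherical join rigidity theorem decomposes $X$ as a suspension $X = \Sph^0 * Y_1$. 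By the slice theorem for singular Riemannian foliations (applied at a preimage of a pole), $Y_1$ is itself an $(n-1)$-dimensional leaf space with $\curv Y_1 \ge 1$. The suspension identity $\partial X = \Sph^0 * \partial Y_1$ combined with maximal volume forces $\partial Y_1 \cong \Sph^{n-2}$ of maximal volume, so the inductive hypothesis yields $Y_1 = L^{n-1}_{\pi/k}$ for some $k$, and
\[
X \;=\; \Sph^0 * L^{n-1}_{\pi/k} \;=\; \Sph^{n-2} * [0, \pi/k] \;=\; L^n_{\pi/k}.
\]

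The main obstacle is thus the reverse inequality $d_X(p, p^*) = \pi$ for some antipodal pair of boundary points. I would tackle this by a combined first-variation and volume-comparison argument: if every antipodal pair $p, p^* \in \partial X$ satisfied a uniform defect $d_X(p, p^*) \le \pi - \varepsilon$, then the minimal geodesics between boundary antipodes would have to enter the interior of $X$ transversally, and a Bishop--Gromov-type comparison on narrow tubes about $\partial X$ should force $\vol(\partial X) < \vol(\Sph^{n-1})$, contradicting the hypothesis. An alternative route passes to the metric double $D(X) = X \cup_{\partial X} X$---an Alexandrov space with $\curv \ge 1$ carrying an isometric involution whose fixed set is an isometric copy of $\Sph^{n-1}$---and analyzes the structure near this totally symmetric codimension-one subset, where the involution should propagate a join decomposition into each half, again reducing matters to the spherical suspension rigidity applied above.
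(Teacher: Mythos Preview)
Your first move---using Theorem~A and Burago--Gromov--Perelman volume rigidity to conclude that $\partial X$ is intrinsically isometric to $\Sph^{n-1}$---is exactly how the paper reduces Theorem~B to the Lens Characterization (Theorem~\ref{lenserid}); see Remark~3.2. After that point your argument diverges from the paper's and has two genuine problems.

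First, the base case $n=1$ is false as stated. A one-dimensional leaf space with boundary is an interval $[0,\alpha]$, but $\alpha$ is \emph{not} forced to be $\pi/k$: for any $0<\alpha\le\pi$ the orbit space of a circle of circumference $2\alpha$ by a reflection is $[0,\alpha]$, and the volume hypothesis $\vol(\partial X)=\vol(\Sph^0)=2$ is vacuous. The restriction $\alpha=\pi/k$ only enters in dimension $\ge 2$, through the M\"unzner angles at the vertices; the paper's induction anchor is accordingly $n=2$ (Theorem~2.4), handled by a comparison-triangle argument in $\Sph^2$.

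Second, and more seriously, the ``main obstacle'' you identify---producing an antipodal pair $p,p^*\in\partial X$ with $d_X(p,p^*)=\pi$---is not overcome. Your first sketch is incoherent: the intrinsic volume of $\partial X$ is fixed by its intrinsic metric and cannot be diminished by any behaviour of geodesics in the interior of $X$, so there is no mechanism by which short chords could ``force $\vol(\partial X)<\vol(\Sph^{n-1})$''. Your doubling sketch is too vague to assess. Note that in the target space $L^n_{\pi/k}$ with $k\ge 2$, \emph{most} boundary-antipodal pairs have extrinsic distance strictly less than $\pi$; only those lying on the edge $\Sph^{n-2}$ achieve $\pi$. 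Any argument along your lines would therefore have to single out that edge a priori, which is essentially the whole problem.

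The paper sidesteps this entirely. Its induction runs through spaces of directions: for every $x\in\partial X$ the hypothesis $\partial(S_xX)\cong\Sph^{n-2}$ and the lower-dimensional case give $S_xX\cong L^{n-1}_{\alpha_x}$. This forces $\partial X$ to have either one face (all $\alpha_x=\pi$) or exactly two hemispherical faces meeting along a totally geodesic $\Sph^{n-2}$ at angle $\pi/k$. The conclusion then comes from the Hang--Wang Ricci rigidity theorem, applied directly in the first case and after a $\D_k$-unfolding to a smooth manifold in the second.
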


Here any such lens will arise. In fact the following construction exhibits essentially all Riemannian manifolds $M$ with a SRF $\mathcal{F}$ and leaf space isometric to a lens:
\smallskip

$\bullet$ Construction: Let $\mathcal{F}$ be a SRF on $\Sph^m$, $m \ge 1$ with $\dim \Sph^m/\mathcal{F} = 1$. In particular, $\mathcal{F}$ is the trivial  point foliation when $m=1$. From \cite{Mu} we know that $\Sph^m/ \mathcal{F} = [0, \alpha]$ with $\alpha \in \{\pi, \pi/2, \pi/3, \pi/4, \pi/6\}$ when $m \ge 2$ and, of course, $\Sph^m/ \mathcal{F} = \Sph^1$ when $m=1$.

Now let $\G$ be a compact Lie group acting isometrically on $\Sph^m$. Further assume that it leaves $\mathcal{F}$ invariant in such a way that the identity component $\G_0$ of $\G$ preserves each leaf $F \in \mathcal{F}$. In this case $\G/\G_0$ is either trivial or $\Z_2$ when $m \ge 2$. If $m=1$, then $\G$ is clearly finite and we assume $\Sph^1/\G = [0,\alpha]$. In particular, $\G = \D_k$ is a dihedral group of order $2k$ and $\alpha = \pi/k$ with $k \in \Z_+$.

Now let $\mathcal{F}$ also denote its canonical ``join'' extension to $\Sph^{n+m-1} = \Sph^{n-2}* \Sph^m$ with point leaves on $\Sph^{n-2}$. Furthermore, assume $\G$ acts freely and by isometries on a Riemannian manifold $P$. Clearly, $\G$ preserves the SRF on $P \times \Sph^{n+m-1}$ with leaves $P \times L$, $L \in\mathcal{F}$ and hence induces a SRF on the associated  bundle $M = P \times_{\G} \Sph^{n+m-1}$ with leaf space $L^n_{\alpha}$, where $\alpha$ is the length of the interval $(\S^m/\mathcal{F})/(\G/\G_0)$. 

\smallskip

We mention that a simple application of the slice theorem for SRF in \cite{MR} and the geometry of $L^n_{\alpha}$ in fact implies that any Riemannian manifold $M$ with a SRF $\mathcal{F}$ having leaf space $L^n_{\alpha}$ is foliated diffeomorphic to a manifold constructed as above, with the possible exception of $m  > 1$, $\alpha < \pi$ and $\dim M/\mathcal{F} = 2$.

\medskip
Both Theorem A and Theorem B are proved by induction on dimension. 

\smallskip

We refer to \cite{Pet}, respectively \cite{BGP} and \cite{BBI} for basic tools and results in Riemannian, respectively Alexandrov geometry that will be used freely. It is our pleasure to thank Marco Radeschi for constructive comments, and in general for sharing his insights on singular Riemannian foliations with us.

\section{Preliminaries}		

In this section we exhibit a few facts about  the Alexandrov geometry of leaf spaces needed in our proofs. When no curvature assumptions are made, these facts are merely translations to Alexandrov geometry of by now well known results for SRF presented in \cite{Ra}.

Throughout,  $M$ is a closed Riemannian manifold and $\mathcal{F}$ a SRF on $M$ with closed leaves. Since, by definition, leaves are locally everywhere equidistant from one another, the Riemannian distance between any pair of leaves agrees with the classical Haussdorff distance between them. When equipped with this metric, 

$\bullet$ $X= M/\mathcal{F}$ is a geodesic space of finite dimension.

$\bullet$ The projection map $P: M \to X=M/\mathcal{F}$ is a submetry, 

\no i.e., the image of any $r$-ball in $M$ is the corresponding $r$-ball in $X$. It follows, \cite{BGP}, that 

\smallskip
$\bullet$ $X= M/\mathcal{F}$ is an Alexandrov space.

\medskip
We now describe the space of directions $S_FX$ at $F \in X$, corresponding to a leaf $F \in \mathcal{F}$. Clearly all directions are geodesic directions. Associated to each leaf $F \in \mathcal{F}$ there is an infinitesimal SRF on each tangent space along $F$ which restricts to a SRF on each normal space, $T_F^{\perp}$ to $F$. This induced foliation is invariant under homotheties, and hence comes from a SRF,  $\mathcal{F}_{F^\perp}$, on the normal sphere $\Sph_F^{\perp}$ at any point $p\in F$. The (foliation) \emph{holonomy group}, $\G_F$ of the leaf $F$ at $p$
acts by isometries on $\Sph_F^{\perp}$ preserving  $\mathcal{F}_{F^\perp}$. Two leaves in $\mathcal{F}_{F^\perp}$ correspond to the same leaf in $\mathcal{F}$ if and only if they are in the same $\G_F$ orbit (see \cite{Ra}). It follows that

\smallskip
$\bullet$ $S_{F}X$ is isometric to $(\Sph_F^{\perp}/\mathcal{F}_{F^\perp})/\G_F$,

\medskip

The manifold $M$ as well as its leaf space $X = M/\mathcal{F}$ admits two natural \emph{stratifications}, one finer than the other (see \cite{Ra}). 

\no The coarser stratification is given by the dimensions of the leaves: 

For each $0 \le d \le \dim M$, the $d$-stratum $M_d \subset M$ is simply the union of all leaves $F \in \mathcal{F}$ with $\dim F = d$:

$\bullet$ each component of $M_d$ is a submanifold of $M$

\no and the closure satisfies:

$\bullet$  $\bar{M_d} = \cup_{d' \le d} M_{d'}$.

\no Furthermore, the restriction of 
$\mathcal{F}$ to any such component is a Riemannian foliation. As all leaves are compact, the restriction of $\mathcal{F}$ to each component of $M_d$ is locally a Riemannian  submersion whose image in $X=M/\mathcal{F}$ is intrinsically a \emph{Riemannian orbifold}. The \emph{regular} part, $M_{reg}$, of $M$ is the non-empty stratum $M_{d_0}$ with $d_0$ maximal. In particular, $M_{reg}$ is open, dense, and connected in $M$, moreover, clearly $\dim X = \dim M-d_0$.

The finer stratification amounts to further stratifying each component $X_d = M_d/\mathcal{F}$ according to its orbifold singularities described via the finite groups defining the local orbifold structure. Each component of the \emph{fine stratification} of $X$ is a manifold which is \emph{locally totally geodesic} in $X$. With this stratification, the ``singular'' points of $M_{reg}/\mathcal{F}$ correspond to so-called \emph{exceptional leaves}, whereas the non-singular points of $M_{reg}/\mathcal{F}$ correspond to \emph{principal leaves}. This part is also open, dense, and even convex in $X$.

For each $F \in \mathcal{F}$, the coarse stratum it belongs to is locally determined by the subspace $V_0 \subset T_F^{\perp}$ making up the \emph{point leaf stratum} of its infinitesimal foliation $\mathcal{F}_{F^\perp}$. The tangent space of its fine stratum at $F \in X$ is isomorphic to the fixed point set $V_0^{\G_F}$ of the holonomy group.

Of particular interest for us is the boundary $\partial X$ of $X$ when non-empty. Clearly, $\partial X$ is the closure of the $\dim X -1$ dimensional strata in $X$. Each component of a stratum of dimension $\dim X-1$ is an open subset of $\partial X$, referred to as an \emph{open face} of $\partial X$. An open face is closed in $\partial X$ only if it also constitutes a component of $\partial X$. The closure of an open face is simply called a \emph{face of the boundary}. In other words, 

$\bullet$ $\partial X$ is the union of its finitely many faces. 


\medskip

For non-trivial infinitesimal SRF we have the following simple fact

\begin{lem}
Let $\mathcal{F}$ be a SRF with closed leaves on $\Sph^n$. Either $\diam \Sph^n/\mathcal{F} \le \pi/2$ or $\diam \Sph^n/\mathcal{F} = \pi$. In the latter case, $\mathcal{F}$ is a suspenion of a SRF on $\Sph^{n-1}$. In particular, the radius $\rad  \Sph^n/\mathcal{F} \le \pi/2$ in all cases.\end{lem}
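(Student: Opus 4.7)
My plan is to combine the Alexandrov structure of $X = \Sph^n/\mathcal{F}$ --- which by the preliminaries is Alexandrov with $\curv X \geq 1$, and by Bonnet--Myers satisfies $\diam X \leq \pi$ --- with two SRF-specific inputs: equidistance of leaves, so that for $p$ in a leaf $F$ one has $d_{\Sph^n}(p, F') = d_X(F, F')$, and the fact that horizontal geodesics in $\Sph^n$ remain horizontal and project to unit-speed geodesics in $X$. The strategy is to establish the suspension rigidity under $\diam X = \pi$ first, then prove the dichotomy by reducing to this rigidity, and finally deduce the radius bound by induction on $n$.

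Suppose $\diam X = \pi$, realized by leaves $F_1, F_2$. For any $p \in F_1$, equidistance gives $d_{\Sph^n}(p, F_2) = \pi$; the uniqueness of antipodes on $\Sph^n$ then forces $-p \in F_2$, hence $-F_1 \subseteq F_2$, and symmetrically $F_2 = -F_1$. Moreover $F_1$ must be a single point, for if $p_1, p_2 \in F_1$ were distinct then $-p_2 \in F_2$ would satisfy $d_{\Sph^n}(p_1, -p_2) = \pi - d(p_1, p_2) < \pi$, contradicting $d_X(F_1, F_2) = \pi$. Thus $F_1 = \{p\}$ and $F_2 = \{-p\}$ are antipodal point leaves. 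The slice theorem for SRFs from \cite{MR} then identifies $\mathcal{F}$ near $p$ with the cone on the infinitesimal foliation $\mathcal{F}_p$ on the unit normal sphere $\Sph_p^{n-1}$; equidistance places each leaf of $\mathcal{F}$ in a latitude sphere $\{x : d_{\Sph^n}(x, p) = r\}$, and matching the cone structure at $p$ with the analogous one at $-p$ via the radial identification globalizes to show that $\mathcal{F}$ is the suspension of $\mathcal{F}_p$ on $\Sph^{n-1}$.

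For the dichotomy, I would assume $d := \diam X > \pi/2$ and aim to force $d = \pi$. Take $F_1, F_2$ realizing $d$ with closest points $p \in F_1$, $q \in F_2$. By first variation, the minimizing geodesic $\gamma$ from $p$ to $q$ is normal to both leaves, hence horizontal, and its great-circle extension $\bar\gamma : [0, \pi] \to \Sph^n$ ending at $-p$ remains horizontal, projecting to a geodesic $\bar c = P\bar\gamma$ of length $\pi$ in $X$. The central geometric input is a ball containment: every $p' \in F_1$ satisfies $d_{\Sph^n}(p', q') \geq d$ for every $q' \in F_2$, and since $d > \pi/2$ this is equivalent to $d(p', -q') \leq \pi - d < \pi/2$; hence $F_1 \subseteq \bar B_{\Sph^n}(-q, \pi - d)$, a ball of diameter $2(\pi - d) < \pi$, which in particular cannot contain the antipodal pair $\{p, -p\}$. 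So $-p \notin F_1$, and $\bar c$ has distinct endpoints. Since $\bar c|_{[0, d]}$ is minimizing and $\pi/2 \le d$, one has $d_X(F_1, \bar c(\pi/2)) = \pi/2$; an Alexandrov comparison along $\bar c$ (exploiting $\curv X \ge 1$ together with the non-closure just established) then upgrades $\bar c$ to a globally minimizing curve of length $\pi$, giving $d_X(F_1, P(-p)) = \pi$ and reducing to the previous paragraph.

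The radius bound follows by induction on $n$. If $\diam X \leq \pi/2$ it is trivial. In the suspension case $X = \Sph^0 * Y$ with $Y = \Sph^{n-1}/\mathcal{F}_p$, the inductive hypothesis gives $\rad Y \leq \pi/2$; a radius-realizing point $y \in Y$, viewed as a point of $X$ at the equator of the suspension, has distance exactly $\pi/2$ to each pole and $\leq \pi/2$ to every other equatorial point, yielding $\rad X \leq \pi/2$. The main obstacle I anticipate is the Alexandrov comparison step at the end of the third paragraph --- upgrading the projected geodesic $\bar c$ of length $\pi$ to a globally minimizing one --- which is precisely where the positive curvature of $X$, the submetric projection $P$, and the ball-containment forced by the $\Sph^n$ geometry (ruling out closed-loop behavior for $\bar c$) must all be brought together.
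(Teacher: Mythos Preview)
Your overall architecture differs from the paper's, and the step you yourself flag as the ``main obstacle'' is a genuine gap, not a routine comparison. Knowing that $\bar c:[0,\pi]\to X$ is the projection of a horizontal great-circle arc, that $\bar c|_{[0,d]}$ is minimizing, and that $\bar c(0)\ne\bar c(\pi)$ does not force $d_X(\bar c(0),\bar c(\pi))=\pi$. There is no off-the-shelf Alexandrov comparison that upgrades a local geodesic of length $\pi$ to a minimizer merely from $\curv\ge 1$ plus distinct endpoints; and worse, projected horizontal geodesics need not even remain local geodesics in $X$ when they pass through singular leaves (think of a meridian in $\Sph^2$ under the equatorial reflection: past the boundary point it retraces itself in the quotient). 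Your ball-containment observation $F_1\subset \bar B(-q,\pi-d)$ only rules out $\bar c(\pi)=\bar c(0)$; it yields no lower bound on their distance in $X$. In fact, first variation at $t=d$ already hints at trouble: since $F_2$ realises the diameter from $F_1$, the distance $d_X(F_1,\bar c(t))$ cannot increase past $t=d$, so the continuation cannot simply keep moving away from $F_1$ in $X$ the way it does in $\Sph^n$.

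The paper bypasses all of this with a short convexity argument carried out entirely in $\Sph^n$. Given $r=d(F_1,F_2)>\pi/2$, the set
\[
C=\Sph^n\setminus B(F_1,r)=\bigcap_{p\in F_1}\{x:\,d(x,p)\ge r\}
\]
is an intersection of closed spherical caps of radius $\pi-r<\pi/2$, hence convex with diameter $<\pi$; by equidistance it is a union of leaves. Its unique soul point $s$ (the point at maximal distance from $\partial C$) is therefore a point leaf. Equidistance from a point leaf forces every leaf into a distance sphere about $s$, so $\{-s\}$ is also a point leaf and $\mathcal{F}$ is the suspension of its restriction to the equator of $\{s,-s\}$; in particular $\diam X=\pi$. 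This yields the dichotomy and the suspension structure in one stroke, without ever analysing geodesics in $X$. Your treatment of the $\diam=\pi$ rigidity and the inductive radius bound are fine but become redundant once the point leaf $s$ is produced this way.
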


\begin{proof}
Suppose $\diam \Sph^n/\mathcal{F} > \pi/2$ and let $F_1$ and $F_2$ be leaves with $\dist (F_1, F_2) = r > \pi/2$. By equidistance of leaves, the convex set $ C = \Sph^n - B(F_1, r)$ is a union of leaves of $\mathcal{F}$ and the point $s \in C$ at maximal distance to the boundary of $C$ is a leaf. Again by equidistance of leaves $-s$ is also a leaf and $\mathcal{F}$ is the suspension of its restriction to the equator of $\{s, -s\}$.
To prove the last statement we note that by induction it follows that $\Sph^n/\mathcal{F}$ is an iterated spherical suspension of an SRF of a sphere whose leaf space has $\diam \leq \pi/2$, and hence $\rad \Sph^n/\mathcal{F} = \pi/2$.
\end{proof}

\section{Rigid lens characterization}        

A lens $L = L^n_{\alpha} = \Sph^{n-2} * [0,\alpha]$  with $\alpha \in (0,\pi]$ is clearly an  an Alexandrov space with curvature $\text{curv} L \ge 1$ and boundary $\partial L$ isometric to $\Sph^{n-1}$ when equipped with its intrinsic length metric.

Conversely, consider the following

\begin{problem}[Lens Problem]
Let $X$ be an $n$-dimensional Alexandrov space with $\text{curv} X \ge 1$ and boundary $\partial X$ isometric to $\Sph^{n-1}$, relative to its induced length metric. Is $X$ isometric to an $n$-dimensional lens? 
\end{problem}
 
In the special case when $X = N$ is a Riemannian manifold with smooth (convex) boundary $N$ is, indeed, isometric to the closed hemisphere of radius $1$, i.e., $N = L_{\pi}$ in our terminology. This is in fact a special case of the main theorem in \cite{HW}:
 
 \begin{thm}[Hang - Wang]\label{hw}
 Let $N$ be an $n$-dimensional compact Riemannian manifold with $\ric N \ge n-1$ and convex boundary $\partial N$, i.e., its second fundamental form is nonnegative. Then $N$ is isometric to a hemisphere of $\Sph^n$ provided $\partial N$ is intrinsically isometric to $\Sph^{n-1}$.
 \end{thm}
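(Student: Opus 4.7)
The plan is to combine Reilly's integral identity with the rigidity of the Hessian equation $\nabla^2 u + u\, g = 0$ (Obata's theorem for hemispheres) in order to produce an isometry from $N$ onto the closed hemisphere of $\Sph^n$. The model to reproduce is clear: on the standard hemisphere, the ambient coordinates $x_1,\ldots, x_n$ orthogonal to the axis of the equator restrict on the interior to $n$ independent functions satisfying Obata's equation, and their boundary traces are exactly the standard coordinate functions on $\Sph^{n-1}$. The task is to reconstruct $n$ such functions on $N$ from the three hypotheses alone.

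First, I would use the intrinsic isometry $\partial N \cong \Sph^{n-1}$ to fix an orthonormal basis of first spherical harmonics $\bar u_1,\ldots, \bar u_n$ on $\partial N$, so that $\Delta_{\partial N} \bar u_i + (n-1) \bar u_i = 0$, normalized so that $\sum_i \bar u_i^2 \equiv 1$ and $\sum_i \nabla^{\partial N}\bar u_i \otimes \nabla^{\partial N}\bar u_i = g_{\partial N}$. For each $i$ I would extend $\bar u_i$ to $u_i$ on $N$ by solving the Dirichlet problem
\[
\Delta u_i + n u_i = 0 \quad \text{in } N, \qquad u_i|_{\partial N} = \bar u_i.
\]
Unique solvability rests on Reilly's sharp Dirichlet-eigenvalue bound $\lambda_1^D(N) \geq n$ under $\Ric \geq (n-1)g$ and $H \geq 0$; the equality case $\lambda_1^D(N) = n$, corresponding to a nonzero solution with zero boundary data, is handled separately by the same Reilly-formula/Obata argument sketched below and already yields that $N$ is a hemisphere, so I may assume strict inequality.

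The core step is to feed each $u_i$ into Reilly's formula
\[
\int_N \bigl[(\Delta u)^2 - |\nabla^2 u|^2 - \Ric(\nabla u, \nabla u)\bigr] dV = \int_{\partial N} \bigl[2(\partial_\nu u)\,\Delta_{\partial N}\bar u + H(\partial_\nu u)^2 + \II(\nabla^{\partial N}\bar u, \nabla^{\partial N}\bar u)\bigr] dA,
\]
and sum over $i$. The three hypotheses enter as the pointwise inequalities $|\nabla^2 u|^2 \geq (\Delta u)^2/n$ (trace Cauchy--Schwarz on symmetric two-tensors), $\Ric(\nabla u,\nabla u) \geq (n-1) |\nabla u|^2$, and $\II \geq 0$ together with $H \geq 0$, while $\Delta_{\partial N}\bar u_i = -(n-1)\bar u_i$ and the identity $\sum_i \II(\nabla^{\partial N}\bar u_i, \nabla^{\partial N}\bar u_i) = H$ handle the boundary side. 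After the summation a short bookkeeping shows that the integrated inequality has to collapse to a chain of equalities, forcing pointwise $\nabla^2 u_i + u_i g = 0$ on $N$ for every $i$, together with $\II \equiv 0$ on $\partial N$.

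Finally, the $n$ resulting Obata functions yield first integrals $|\nabla u_i|^2 + u_i^2 = c_i$; after verifying $\sum u_i^2 \leq 1$ in $N$ with equality on $\partial N$, one assembles the isometric map $\Phi = (u_1,\ldots, u_n, \sqrt{1-\sum u_i^2}): N \to \Sph^n \subset \R^{n+1}$ onto the closed upper hemisphere, which is the standard end-game of any Obata-type argument. I expect the main obstacle to be the equality analysis in the summed Reilly identity: extracting the \emph{pointwise} Hessian identity for each individual $u_i$ from one integrated and summed inequality is delicate, and the cleanest route is to exploit the $\SO(n)$-equivariance of the first spherical-harmonic eigenspace, so that equality for the sum in the chosen basis propagates to equality for every unit vector of first eigenfunctions, and thereby to pointwise Obata rigidity.
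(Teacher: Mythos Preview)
The paper does not prove this statement at all: Theorem~\ref{hw} is quoted as ``the main theorem in \cite{HW}'' and used as a black box in the proofs of Lemma~2.3 and Theorem~2.5. There is therefore no proof in the present paper to compare your proposal against.

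That said, your outline is essentially the argument of the original Hang--Wang paper \cite{HW}: pull back the first spherical harmonics from $\partial N \cong \Sph^{n-1}$, extend them to eigenfunctions of $\Delta + n$ on $N$ via the Dirichlet problem, plug into Reilly's formula, and use the hypotheses $\Ric \geq (n-1)g$, $\II \geq 0$ together with the trace inequality $|\nabla^2 u|^2 \geq (\Delta u)^2/n$ to force equality and hence $\nabla^2 u_i + u_i g = 0$ pointwise. One remark on your ``main obstacle'': you do not need the $\SO(n)$-equivariance trick to pass from the summed equality to the individual pointwise Hessian identities. Each of the three inequalities you feed in is pointwise nonnegative, so equality in the integrated sum forces equality pointwise in each summand separately; in particular $|\nabla^2 u_i|^2 = (\Delta u_i)^2/n$ already gives $\nabla^2 u_i = \tfrac{\Delta u_i}{n} g = -u_i g$ for every $i$ without any further symmetry argument.
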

 
 As a precursor to Theorem B, we will provide a positive answer to the above problem when  $X = M/\mathcal{F}$ is the leaf space of a singular Riemannian foliation by closed leaves (for the general case cf. \cite{GP}). 
 
 We begin with the following
 
 \begin{lem}
 When $X = M/\mathcal{F}$ has $\curv X \ge 1$ and $\partial X = \Sph^{n-1}$, then $X - \partial X$ is a smooth manifold. Moreover, any $x \in X$ has distance at most $\pi/2$ to the soul point $s\in X$. In particular, $X = L^n_{\pi}$, when $|s \partial X| = \pi/2$.
 \end{lem}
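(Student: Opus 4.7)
The plan is to establish the three claims jointly by induction on $n = \dim X$, reducing $X$ to a smooth Riemannian manifold with boundary so that the Hang--Wang theorem (Theorem~\ref{hw}) can be applied.

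The base case $n = 1$ is immediate: $X$ is a $1$-dimensional Alexandrov space of curvature $\ge 1$ with two boundary points at intrinsic distance $\pi$, so $X = [0,\pi] = L^1_\pi$ and all three claims are evident.

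For the inductive step, the first task is to analyze $\Sigma_p X$ at each $p \in \partial X$. Since $\partial X$ is intrinsically $\Sph^{n-1}$ (and hence a smooth Riemannian manifold of constant curvature $1$), the intrinsic space of directions $\Sigma_p \partial X$ is $\Sph^{n-2}$. By the slice theorem for SRF \cite{MR}, $\Sigma_p X$ carries the structure of an $(n-1)$-dimensional leaf space of curvature $\ge 1$ with boundary $\partial \Sigma_p X = \Sigma_p \partial X$ intrinsically $\Sph^{n-2}$. Applying the inductive hypothesis of the lemma to $\Sigma_p X$ and combining with Theorem~\ref{hw} at dimension $n-1$ (applied to the smooth Riemannian structure the inductive step provides) yields $\Sigma_p X = L^{n-1}_\pi = \Sph^{n-1}_+$, the closed hemisphere. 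Thus every boundary point of $X$ is a smooth half-space point with totally geodesic face.

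Next I would propagate this local smoothness from $\partial X$ into the interior. Any fine stratum of dimension $<n$ meeting $X \setminus \partial X$ would, via a minimizing geodesic from such an interior singular leaf to $\partial X$ together with the slice theorem, force the tangent cone at some boundary point to differ from the hemisphere model established above --- a contradiction. Using the slice theorem and the convexity of the principal stratum in $X$, one rules out all such strata, so that $X$ becomes a compact Riemannian $n$-manifold with smooth, totally geodesic boundary $\partial X = \Sph^{n-1}$ and $\Ric \ge n-1$. Applying Theorem~\ref{hw} at dimension $n$ then gives $X = \Sph^n_+ = L^n_\pi$, from which the three claims of the lemma follow at once: $X \setminus \partial X$ is smooth, the soul $s$ is the north pole of the hemisphere so $d(x,s) \le \pi/2$ for every $x \in X$, and $|s\,\partial X| = \pi/2$, which makes the final ``in particular'' clause automatic.

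The principal obstacle will be the propagation of smoothness from $\partial X$ into $X \setminus \partial X$: Alexandrov geometry in itself does not guarantee such propagation, and the argument must make essential use of the leaf-space hypothesis --- via the slice theorem and the convexity of the principal stratum --- to exclude lower-dimensional interior strata. A secondary delicate point, the convexity of $\partial X$ needed to invoke Hang--Wang, follows once each $\Sigma_p X$ has been identified with a hemisphere, since this forces $\II_p = 0$ at every boundary point.
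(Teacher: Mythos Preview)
Your inductive step proves too much: you conclude $X = L^n_\pi$ unconditionally, whereas the lemma only asserts this when $|s\,\partial X| = \pi/2$. The stronger conclusion is actually false --- for any $k \ge 2$ the lens $L^n_{\pi/k}$ is a leaf space with $\curv \ge 1$ and boundary intrinsically $\Sph^{n-1}$, yet it is not $L^n_\pi$. The gap lies in the step where you claim that the inductive hypothesis together with Hang--Wang forces $S_p X = L^{n-1}_\pi$ at every $p \in \partial X$. The inductive hypothesis only guarantees smoothness of the \emph{interior} of $S_p X$; it says nothing about smoothness up to the boundary. Theorem~\ref{hw} requires a smooth compact manifold with smooth convex boundary, and this fails exactly when $S_p X$ has a corner --- which is what happens for $p$ on the edge $\Sph^{n-2}$ of $L^n_{\pi/k}$, where $S_p X = L^{n-1}_{\pi/k}$. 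Nor can you invoke the ``in particular'' clause of the inductive hypothesis, since you have not checked that the soul of $S_p X$ lies at distance $\pi/2$ from its boundary (and in the corner case it does not).

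The paper's proof is not inductive and is quite different. For interior smoothness it argues by volume: if some $x \in X \setminus \partial X$ were not a principal leaf, then $\vol S_x X \le \tfrac{1}{2}\vol \Sph^{n-1}$ by Lemma~1.1; gluing a curvature-$1$ hemisphere $\mathbb{D}^n$ to $X$ along $\partial X$ (Petrunin) produces a closed Alexandrov space $Y$ with $\curv Y \ge 1$ and $\vol Y > \tfrac{1}{2}\vol \Sph^n$, which contradicts volume comparison centered at $x$. The bound $|sx| \le \pi/2$ is then obtained directly from convexity of $\partial X$, criticality of $s$ for the distance to $\partial X$, and Toponogov. The final clause follows because $|s\,\partial X| = \pi/2$ forces every boundary point to be at distance exactly $\pi/2$ from $s$, so $X$ is the spherical cone on $S_sX = \Sph^{n-1}$.
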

 
 \begin{proof}
 We start by showing that the interior of $X$ consists of principal leaves. Assume that $x\in \mathrm{int}X$ is not a principal leaf. By Lemma 1.1 it follows that $\vol S_xX \leq \frac{1}{2} \vol \Sph^{n-1} $. Now glue the constant curvature 1 hemisphere $\mathbb{D}^{n}$ with boundary $\Sph^{n-1}$ onto $X$ along the boundary $\partial X$, to obtain an $n$-dimensional Alexandrov space $Y$ without boundary and with curv$Y \ge 1$ \cite{Pe1}. By volume comparison we have:
 \begin{eqnarray*} 
 \vol (\mathbb{D}^{n} ) & = & \frac{1}{2} \vol (\Sph^{n})\\ 
 & < & \vol Y\\ 
 & \le & \vol (\Sigma_1 S_xX)\\ 
 & = & \vol (S_xX)  \int_0^{\pi} {\sin^{n-1} (t )} dt\\ 
 & \leq & \frac{1}{2} \vol (\Sph^{n-1}) \int_0^{\pi} {\sin^{n-1} (t )} dt\\ 
 & = & \frac{1}{2} \vol (\Sph^{n}).
 \end{eqnarray*} 
  A contradiction.

   Let $s \in X$ be the soul of $X$, i.e., the unique point at maximal distance to $\partial X$. We claim that any point of $X$ has distance at most $\pi/2$ to $s$. To see this, we begin by showing that the distance from the boundary $\partial X$ to $s$ is at most $\pi/2$. In fact, note that, on one hand the boundary is convex, and on the other that, the distance function from s is a support function from below at any point on  $\partial X$ closest to s. If such a point was at distance $>\pi/2$ from $s$, the boundary would be concave rather than convex. Now, let $x \in X$ be any point of $X$ and $c$ a minimal geodesic from $s$ to $x$. Since $s$ is a critical point for the distance function to $\partial X$ there is a minimal geodesic, $d$ from $s$ to a closest point $y \in \partial X$ that makes an angle at most $\pi/2$ to $c$. By the choice of $y$ any minimal geodesic $e$ from $y$ to $x$ will make an angle at most $\pi/2$ to $d$ at $y$. By the Toponogov's comparison theorem both $e$ and $c$ have lengths at most $\pi/2$.

 Note that if $|s \partial X| = \pi/2$, then every $x \in \partial X$ has distance $\pi/2$ to $s$. It follows that $X$ is the spherical cone on $\partial X$, which in turn is isometric to the space of directions $S_sX = \Sph^{n-1}$ at $s$. 
 \end{proof}

 As the rest of the proof is by induction on dimension we start with $\dim X = 2$.
  
 \begin{thm}[Induction Anchor]
 Let $X = M/\mathcal{F}$ be a two-dimensional leaf space with $\text{curv} X \ge 1$ and boundary $\partial X$. The length of the boundary satisfies $\ell(\partial X) \le 2 \pi$ with equality if and only if $X =  L^2_{\alpha}$, where $\alpha = \pi/k$, for $k\in \Z_+$.
  \end{thm}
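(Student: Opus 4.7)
The plan is to adapt the soul-based argument of the preceding lemma to the $2$-dimensional setting, bound $\ell(\partial X)$ via spherical comparison from the soul, and then use rigidity of Toponogov combined with the leaf space structure at the poles to identify $X$ as a lens $L^2_{\pi/k}$.

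First I would locate the soul $s \in X$, the unique maximum of the concave function $d(\cdot, \partial X)$. The argument of the preceding lemma never uses the hypothesis $\partial X = \Sph^{n-1}$, only the convexity of the boundary of a leaf space and Toponogov's theorem; applied verbatim it gives $d(s, \partial X) \le \pi/2$ and, via the bouncing argument, $d(s, x) \le \pi/2$ for every $x \in X$. Next, to prove $\ell(\partial X) \le 2\pi$, I would use polar coordinates from $s$: the space of directions $\Sigma_s X$ is a $1$-dimensional Alexandrov space with $\curv \ge 1$, hence a circle of length $\ell(\Sigma_s X) \le 2\pi$. For each $v \in \Sigma_s X$, let $r(v) \in [0, \pi/2]$ be the distance along $\gamma_v$ from $s$ to $\partial X$, and let $\Phi(v) = \gamma_v(r(v))$ parametrize $\partial X$. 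Applying Toponogov's spherical law of cosines to infinitesimal triangles $\{s, \Phi(v), \Phi(v+dv)\}$ yields the estimate $d_X(\Phi(v), \Phi(v+dv)) \le \sqrt{r'(v)^2 + \sin^2 r(v)}\, dv$, which, combined with the convexity of $\partial X$ enforcing the relevant monotonicity on $r(v)$, integrates to $\ell(\partial X) \le \ell(\Sigma_s X) \le 2\pi$. Equivalently, doubling $X$ along $\partial X$ produces a closed $2$-dimensional Alexandrov space with $\curv \ge 1$ in which the image of $\partial X$ is a convex closed curve whose length is bounded by $2\pi$ by the classical Alexandrov estimate.

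For the equality case $\ell(\partial X) = 2\pi$, every Toponogov inequality above must be rigid. This forces $\ell(\Sigma_s X) = 2\pi$ so that $s$ is a regular interior point, and the polar coordinate description of $X$ becomes literally a spherical join: $X$ is isometric to $\Sph^0 * [0, \alpha] = L^2_\alpha$ for some $\alpha \in (0, \pi]$. To pin down $\alpha$, I would use the leaf space description at a pole $p_\pm$ of the lens: $\Sigma_{p_\pm} X = [0, \alpha]$, and by the formula $\Sigma_{p_\pm} X = (\Sph_F^\perp/\mathcal{F}_{F^\perp})/\G_F$ this is a $1$-dimensional leaf space of a singular Riemannian foliation on a sphere, possibly quotiented by a finite holonomy group. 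By Munteanu's classification in [Mu] together with the dihedral quotient discussion in the introduction, such an interval has length $\pi/k$ for some $k \in \Z_+$; hence $\alpha = \pi/k$. The converse is immediate from the construction given earlier.

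The main technical obstacle I anticipate is the length bound $\ell(\partial X) \le 2\pi$ itself. Toponogov cleanly produces the angular contribution $\sin r(v)$, but controlling the radial derivative $r'(v)$ requires convexity of $\partial X$, which for a leaf space follows from Perelman--Petrunin's theory of extremal subsets. The cleanest workaround is the doubling approach, which sidesteps $r'(v)$ entirely by reducing the bound to the standard Alexandrov statement that convex closed curves in a closed $2$-dimensional Alexandrov space with $\curv \ge 1$ have length at most $2\pi$.
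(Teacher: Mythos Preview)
Your overall architecture (locate the soul, compare from the soul into $\Sph^2$, then invoke rigidity and the leaf-space structure at the vertices to pin down $\alpha=\pi/k$) matches the paper's. The equality analysis via the space of directions at a pole is essentially the same as the paper's vertex analysis (the paper front-loads M\"unzner to see that every boundary corner has angle $\pi/k$, then observes that a convex region in $\Sph^2$ with perimeter $2\pi$ is a bi-angle; you back-load it). So the difference lies entirely in how you establish the inequality $\ell(\partial X)\le 2\pi$.

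That step has a real gap in both of your alternatives. In the polar-coordinate version, Toponogov's hinge comparison correctly gives $\ell(\partial X)\le \int_{\Sigma_s X}\sqrt{r'(v)^2+\sin^2 r(v)}\,dv$, but this integrand is \emph{not} pointwise $\le 1$ (take $r=\pi/2$, $r'\ne 0$), so the conclusion $\le \ell(\Sigma_sX)$ does not follow; your appeal to ``monotonicity of $r(v)$'' is unclear since $r$ is a function on a circle. What actually makes the integral small is that the curve $v\mapsto (r(v),v)$ traces a \emph{convex} curve in $\Sph^2$, and convex curves in $\Sph^2$ have length $\le 2\pi$; this convexity is not a property of $r$ alone but comes from a second application of Toponogov at the boundary points. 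This is precisely what the paper does: it partitions $\partial X$ into minimal segments, cones them to $s$, lays down the comparison triangles side by side in $\Sph^2$ (closing any angular defect at $s_0$ with a circular arc), and uses Toponogov at the partition points to see that the resulting planar region $C'\subset\Sph^2$ is convex, whence $\ell(\partial C')\le 2\pi$. Your doubling workaround does not actually sidestep this: in $2X$ the image of $\partial X$ is a closed (quasi)geodesic, not a ``convex curve'', and the statement that closed geodesics in closed $\curv\ge 1$ surfaces have length $\le 2\pi$ is, when unwound, the same statement you are trying to prove (each side of the geodesic is a $\curv\ge 1$ disk with locally geodesic boundary, and you are asking for its perimeter bound). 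So the missing ingredient is exactly the comparison-region convexity argument; once you insert it, your proof and the paper's coincide.
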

 
 \begin{proof}
 
 Recall that any two dimensional leaf space is an orbifold \cite{LT}. Moreover let $F$ be a vertex point $F \in \mathcal{F}$. Then $\mathcal{F}_{F^\perp}$ is either a point foliation when $\dim F^\perp = 2$ or else a foliation by  isoparametric hypersurfaces in the $\Sph_F^{\perp}$. In the first case 
$(\Sph_F^{\perp}/\mathcal{F}_{F^{\perp}})/\G_F = (\Sph_F^{\perp})/\G_F$, where $\G_F = \D_k$ is a dihedral group. In the second case, $\Sph_F^{\perp}/\mathcal{F}_{F^\perp} = [0,\alpha]$, with $\alpha \in \{\pi, \pi/2, \pi/3, \pi/4, \pi/6\}$ by \cite{Mu}. In this case, it is also possible that the holonomy is $\G_F = \Z_2$ and interchanges the two focal leaves of the isoparametric foliation, thus cutting these angles in half, i.e., adding $\pi/8$ and $\pi/12$. In particular, $\partial X$ is a geodesic polygon $P$, where the angle at each vertex is $\pi/k$ with $k \ge 2$. From $\curv X \ge 1$ it follows, that $P$ can have at most three vertices.

Partition $P$ further (if necessary) so that each part of $P$ is a minimal geodesic in $X$. Join each partition point of $P$ to the soul point $s$, by a minimal geodesic. In $\Sph^2$ draw all the corresponding comparison triangles adjacent to one another as in $X$ with common vertex $s_0 \in \Sph^2$ say at the south pole corresponding to $s$. 
 
 Note that, unless all comparison angles at $s_0$ are equal to the actual angles at $s$, this configuration of geodesic triangles will not close up. If so, the gap is joined by a circular arc of radius the length of the corresponding geodesics. From Toponogov's triangle comparison theorem, the region $C'$ described in $\Sph^2$ is convex, and thus has length at most $2 \pi$, i.e., the inequality is established.
 
 Now assume $\ell(\partial X) = 2 \pi$, and note that a convex region $C \subset \Sph^2$ has boundary of length $2 \pi$ if an only if $C$ is the intersection of two closed hemispheres, i.e., a bi-angle, or lens in our terminology. This immediately rules out three vertices of $P$. Moreover, by construction of $C'$ and $\ell(\partial X) = 2 \pi$, it follows that all angles at $s$ in $X$ agree with the corresponding angles at $s_0$ and that there is no gap between the first and last triangle. Since the lengths of the sides opposite the angles at $s$ and $s_o$ are equal by construction, the hinge rigidity version of Toponogov implies that the surfaces spanned by the triangles in $X$ are isometric to the ones in $C'$. It follows that $X$ is the intersection of two hemispheres. 
\end{proof}

We are now ready to complete the proof of our main result in this section:

\begin{thm}\label{lenserid}
An Alexandrov leaf space $X^{n}$ with $\text{curv} X \ge 1$ and boundary $\partial X$ intrinsically isometric to $\Sph^{n-1}$ is isometric to $L^n_{\alpha}$ with $\alpha = \pi/k$, where $k\in \Z_+$.
\end{thm}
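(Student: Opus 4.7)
The argument proceeds by induction on $n = \dim X$, the base case $n = 2$ being the preceding Induction Anchor. Suppose the conclusion holds in all dimensions strictly less than $n$, and let $X^n$ be as in the statement. Let $s \in X$ denote the soul and $d = |s\,\partial X|$. By the preceding Lemma, $d \le \pi/2$, with equality forcing $X = L^n_\pi$. Henceforth assume $d < \pi/2$, and set $\alpha := 2d \in (0, \pi)$; this is the lens angle to be recovered.

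The overall plan is to show that $\partial X$ consists of exactly two closed totally geodesic hemispheres of $X$, meeting along a common round equator $E \cong \Sph^{n-2}$ at a dihedral angle of the form $\pi/k$, and then to recover $X$ as the spherical join $\Sph^{n-2} * [0, \alpha]$. The first step is infinitesimal: at any point $p$ of $\partial X$, the space of directions $S_p X$ is itself an $(n-1)$-dimensional leaf space with $\curv \ge 1$ whose boundary is intrinsically $\Sph^{n-2}$ (because $\partial X \cong \Sph^{n-1}$ is smooth at $p$). The inductive hypothesis gives $S_p X = L^{n-1}_{\pi/k_p}$ for some $k_p \in \Z_+$. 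At a principal face point $p$, one has $k_p = 1$ and $S_p X$ is a round closed hemisphere; at an interior point of a codimension-$2$ stratum of $X$ (where two top-dimensional faces of $\partial X$ meet), $\pi/k_p$ is precisely the dihedral angle between them.

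The second and key step is to show that there are exactly two top-dimensional faces, each a round closed hemisphere. Since every $S_p X$ is a lens, and a lens has exactly two top-dimensional boundary components, no three or more faces can meet at any point of $\partial X$. Each top-dimensional face $F$ of $\partial X$ is totally geodesic in $X$, inherits a SRF with closed leaves from $\mathcal{F}$, and has $\curv F \ge 1$; using $\partial X \cong \Sph^{n-1}$ intrinsically, $F$ sits convexly inside the round sphere, and $\partial F$ is intrinsically $\Sph^{n-2}$. The inductive hypothesis applied to $F$ yields $F = L^{n-1}_\beta$, and the convex embedding into $\Sph^{n-1}$ forces $\beta = \pi$. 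Two closed round hemispheres exhaust $\Sph^{n-1}$, so there are exactly two faces $F_0, F_1$, sharing a common equator $E \cong \Sph^{n-2}$.

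Finally, Step 1 applied at any point of $E$ identifies the dihedral angle $\alpha$ between $F_0$ and $F_1$ as $\pi/k$ for some $k \in \Z_+$. The slice theorem for singular Riemannian foliations along $E$, together with the explicit geometry of $L^n_\alpha = \Sph^{n-2} * [0, \alpha]$, then produces a foliated isometry $L^n_\alpha \to X$ that carries the edge onto $E$ and the two faces of $L^n_\alpha$ onto $F_0$ and $F_1$, completing the induction. The principal obstacle is the middle step: one must reconcile the two a priori distinct intrinsic metrics on each face (inherited from $X$ and from $\partial X$) and verify that the boundary of each face is intrinsically a round sphere, so that the inductive hypothesis applies; once this is in place, the tiling (exactly two hemispheres) and the final join structure follow essentially formally.
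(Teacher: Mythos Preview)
Your induction setup and infinitesimal step coincide with the paper's: at each $p\in\partial X$ one has $S_pX=L^{n-1}_{\pi/k_p}$ by induction, and the dichotomy ``all $k_p=1$'' versus ``some $k_p\ge 2$'' is exactly how the paper proceeds. But from that point on your argument diverges, and the divergence contains a genuine gap.

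Your final step is not an argument. You write that ``the slice theorem for singular Riemannian foliations along $E$, together with the explicit geometry of $L^n_\alpha$, then produces a foliated isometry $L^n_\alpha\to X$.'' The slice theorem is a \emph{local} normal-form statement near a single leaf; it does not, by itself, produce a global isometry of $X$ with a model lens. Something has to control the metric on all of $X$, not just a tube around $E$, and nothing in your outline does that. Relatedly, you explicitly name the ``principal obstacle'' (reconciling the two intrinsic metrics on each face and checking that $\partial F$ is intrinsically $\Sph^{n-2}$) and then do not resolve it; you simply assert that once it is in place the rest is ``essentially formal.'' As written, the proof is an induction whose induction step has not been carried out.

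The paper closes the argument in a quite different way. In the case where every $k_p=1$ it observes that $X$ is then a smooth Riemannian manifold with $\sec\ge 1$ and totally geodesic boundary $\Sph^{n-1}$, and invokes the Hang--Wang rigidity theorem directly. In the remaining case it first notes (via Lytchak--Thorbergsson) that $X$ is an orbifold, since all singularities lie in codimension $\le 2$. It then glues $2k$ reflected copies of $X$ along their faces to obtain a smooth closed Riemannian manifold $Y$ with a $\D_k$-action and $Y/\D_k=X$; the fixed set of a reflection is a totally geodesic $\Sph^{n-1}$ inside $Y$, so Hang--Wang forces $Y$ to be a round sphere and hence $X=L^n_{\pi/k}$. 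The essential input you are missing is precisely this external rigidity theorem (or some substitute for it): your outline never explains what forces the metric in the interior of $X$ to be the standard one.
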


\begin{proof}

Assume by induction that the Theorem holds in all dimensions $\le n$, and consider an $(n+1)$-dimensional leaf space $X$ whose boundary is intrinsically isometric to $\Sph^n$.

For each boundary point $x \in \partial X$, the space  of directions $S_xX$ has non-empty boundary intrinsically isometric to $\Sph^{n-1}$ by assumption. By the induction hypothesis, for each $x \in \partial X$ the space of directions $S_xX$ is isometric to $L^n_{\alpha_x}$ for $\alpha_x$ as in the Theorem.

Assume first that for all $x \in \partial X$, $\alpha_x = \pi$, in particular, $\partial X$ is a single stratum of $X = M/\mathcal{F}$. Thus $X$ is a smooth Riemannian manifold with sectional curvature $\ge 1$ and totally geodesic boundary and the result follows from \cite{HW}.

Now suppose $x \in \partial X$ has $S_xX$ isometric to $L^n_{\alpha_x}$ with $\alpha_x < \pi$. Any such $x$ belongs to an $(n-1)$-dimensional stratum of $X$, and the closures of any two such strata cannot meet due to the induction hypothesis. So such a stratum is a totally geodesic $\Sph^{n-1}$ in the boundary $\Sph^n$, and only one such stratum can exist. Thus, the boundary has exactly two faces, each intrinsically isometric to a hemisphere of curvature 1, meeting each other at an angle $\pi/k$, where $k \in \Z_+$. As all singularities lie in strata of codimension $ \leq 2$ it follows from Theorem 1.4 in \cite{LT} that $X$ is an orbifold. We claim that it is, in fact, a good orbifold.

Consider the metric space $Y$ obtained by gluing together $2k$ copies of $X$  along one face at a time. Consecutive copies of $X$ are reflections of each other along their common face.  The dihedral group $\D_k$ of order $2k$ acts by isometries on $Y$ with quotient space $X$. Next note that the local orbifold covers of $X$ are determined by reflections for points on the open faces and for the singular points by the linear action of $\D_k$ on $\Sph^{n-2}*\Sph^1$ that acts trivially on $\Sph^{n-2}$, the space of directions for the $(n-1)$-dimensional stratum. Thus the local orbifold covers of $X$ are metrically isometric to open sets of $Y$. This shows that $Y$ is a smooth Riemannian manifold. It also contains a totally geodesic copy of $ \Sph^n $, namely, the fixed point set of a reflection in $\D_k$. We can then use \cite{HW} again to see that $Y$ is a constant curvature sphere and the quotient is an Alexandrov lens. 
\end{proof}




\section{Intrinsic convexity of the regular boundary}   

We say that a point $x \in \partial X$ of an $(n+1)$-dimensional Alexandrov space $X$ is (boundary) \emph{regular}, if its space of directions $S_xX$ is isometric to a lens $\Sph^{n-1}* [0,\alpha]$, with $\alpha \le \pi$. We denote this set of points by $\partial X_0$. Note that by Theorem 2.4 these are precisely points on the boundary where $\partial S_xX = \Sph^{n-1}$ and thus correspond to the regular points on the boundary provided the boundary is an Alexandrov space.

In a leaf space $X = M/\mathcal{F}$, the regular boundary points are dense in the boundary since they are the $n$- and $(n-1)$-dimensional strata of $X$ in $\partial X$. Moreover, the open faces of $X$, i.e., the $n$-dimensional strata of $X$, are Riemannian manifolds that are (locally) totally geodesic in $X$. In particular, the (open) faces are locally Alexandrov with the same lower curvature bound at that of $X$. From Petrunin's gluing theorem \cite{Pe1} it also follows that, if $x$ is boundary regular but not in an open face, then we intrinsically still have the same local curvature control near $x$ on the boundary.

By \cite{Pe2}, to complete the proof of Theorem A, it thus suffices to prove that the set $\partial X_0$ is boundary convex, i.e., any minimal curve in $\partial X$ connecting two points of $\partial X_0$ lies entirely in $\partial X_0$.

\begin{thm}
In a leaf space $X = M/\mathcal{F}$, the set $\partial X_0$ of regular points is intrinsically convex.
\end{thm}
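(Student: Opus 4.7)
The plan is to proceed by induction on $\dim X = n+1$, treating Theorems A, B, and the present statement as a joint inductive hypothesis. The base case $\dim X = 2$ is vacuous, since every boundary point is already regular by the Induction Anchor, and the case $\dim X = 3$ reduces immediately to observing that at any $x_0 \in \partial X \setminus \partial X_0$ the boundary $\partial S_{x_0}X$ is a $1$-dimensional closed polygon of total length at most $2\pi$ (again by the Induction Anchor), so it admits no pair of points at intrinsic distance $\pi$.

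For the inductive step, fix $\dim X = n+1 \geq 4$ and suppose for contradiction that there is a minimizing curve $\gamma$ in $(\partial X, d_{\mathrm{intr}})$ connecting two points of $\partial X_0$ and passing through a point $x_0 \in \partial X \setminus \partial X_0$. First I would localize at $x_0$: by the slice theorem for SRF (cf.\ \cite{MR, Ra}), the tangent cone $T_{x_0}X$ is $C(S_{x_0}X)$ with $S_{x_0}X$ an $n$-dimensional leaf space of $\curv \geq 1$, and the inductive Theorem A identifies $T_{x_0}\partial X$ with the Alexandrov cone $C(\partial S_{x_0}X)$ over the intrinsic length metric on $\partial S_{x_0}X$, which has $\curv \geq 1$. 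A first-variation / blow-up analysis of $\gamma$ at $x_0$ then produces a pair of points $v^+, v^- \in \partial S_{x_0}X$ at intrinsic distance exactly $\pi$.

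The bulk of the proof consists in showing that the existence of such an antipodal pair forces $\partial S_{x_0}X$ to be isometric to $\Sph^{n-1}$; the inductive Theorem B applied to $S_{x_0}X$ then concludes that $S_{x_0}X$ is itself a lens, contradicting $x_0 \notin \partial X_0$. Diameter rigidity for Alexandrov spaces with $\curv \geq 1$ first decomposes $\partial S_{x_0}X$ as a spherical suspension $\Sigma Y$ with $v^\pm$ as poles. Exploiting density of the regular boundary together with the inductive convexity statement applied within $\partial S_{x_0}X$, one can replace $v^\pm$ by nearby regular boundary points of $S_{x_0}X$, at which the space of directions is already a lens. Compatibility of these local lens pictures with the suspension decomposition, iterated on the equator $Y$, should upgrade $\partial S_{x_0}X$ to a genuine round sphere.

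The main obstacle is precisely this upgrading step: abstract Alexandrov diameter rigidity produces only a suspension, not a round sphere, so the full strength of the joint induction hypothesis (in particular the lens characterization of Theorem B and the intrinsic convexity of the regular boundary in dimensions below $n+1$) must be fed back into the geometry of $\partial S_{x_0}X$. Tracking how the SRF stratification of $S_{x_0}X$ is inherited by the suspension factor $Y$, and iteratively controlling the regularity of equators, is the technical heart of the argument.
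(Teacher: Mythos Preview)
Your overall architecture---induction on dimension, blowing up a boundary minimizer at an interior point, producing an antipodal pair in $\partial S_{x_0}X$, recognizing the suspension, and invoking the inductive lens rigidity to get a contradiction---matches the paper. The base cases are fine. The gap is exactly where you flag it: the upgrading of the suspension $\Sigma Y$ to a round $\Sph^{n-1}$.

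Your proposed fix (perturb $v^{\pm}$ to nearby regular boundary points of $S_{x_0}X$, then iterate on the equator) is not only unfinished but unlikely to go through as stated: once you perturb the poles they are no longer at intrinsic distance $\pi$, so the suspension decomposition is lost, and ``compatibility of the local lens pictures with the suspension'' is left as an assertion. The paper avoids this difficulty with a single observation you are missing: rather than picking an arbitrary non-regular point on $\gamma$, take $t_0$ to be the \emph{first} parameter with $c(t_0)\notin\partial X_0$ (the regular set is open along $c$, so this makes sense). Then the direction $u=v^+$ of $c$ at $c(t_0)$ pointing back toward the regular endpoint is itself a regular boundary point of $S_{c(t_0)}X$, i.e.\ $S_u\bigl(S_{c(t_0)}X\bigr)$ is a lens and hence $S_u\bigl(\partial S_{c(t_0)}X\bigr)=\partial S_u\bigl(S_{c(t_0)}X\bigr)=\Sph^{n-2}$. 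Since $\partial S_{c(t_0)}X$ is the spherical suspension over its space of directions at $u$, this immediately gives $\partial S_{c(t_0)}X=\Sph^{n-1}$ in one step, with no perturbation and no iteration on equators. The inductive Theorem~B then forces $S_{c(t_0)}X$ to be a lens, contradicting the choice of $t_0$.

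In short: replace your perturbation/iteration scheme by the open--closed argument along the minimizer; the pole of the suspension is already regular if you choose the point of failure correctly.
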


\begin{proof}
Let $x,y \in \partial X_0$ and connect them by a minimal curve $c: [a,b] \to \partial X$. Pick any $t_0 \in (a,b)$.

Now we blow up $X$ at $c(t_0)$. In the limit $c$ becomes a line $\ell = \R$ in the tangent cone $T_{c(t_0)}X$ passing through the cone point $c(t_0)$ and belongs to its boundary. 
By induction, we can assume that the boundary of the space of directions $S_xX$ at $x$ is an Alexandrov space with curv$\partial S_xX \ge 1$ (having verified it dimension 2). We claim that the intrinsic diameter of $\partial S_{c(t_0)}X $ is $\pi$. Suppose there is a path $\gamma$ in $\partial S_{c(t_0)}X $ of length less than $\pi$ joining the opposite directions $u$ and $v$ of $\ell$ at $c(t_o)$. In this case $c$ would not be a minimal curve in the boundary. It follows that $\partial S_{c(t_0)}X $  is intrinsically isometric to a spherical suspension of the space of directions at $u$ (and at $v$). Now the set of regular boundary point is clearly open in the boundary, and hence along $c$. We claim it is closed along $c$ as well. Suppose $t_0$ is the first time where $c(t_0)$ not regular. Then the corresponding direction $u$ towards $x$ is regular, and hence $\partial S_{c(t_0)}X $ is intrinsically isometric to $\Sph^{n-1}$. A contradiction. 
\end{proof}

\begin{rem}
The statement in Theorem B now follows from \ref{lenserid} and Theorem A, since knowing that $\partial X$ is an Alexandrov space with $\curv X \ge 1$ and maximal volume implies that it is a round sphere of curvature 1.
\end{rem}

\providecommand{\bysame}{\leavevmode\hbox    
to3em{\hrulefill}\thinspace}                                            

\end{document}